\newcommand{\bb}[1]{\mathbb{#1}}
\newcommand{\cc}[1]{\mathcal{#1}}
\newcommand{\lie}[1]{\mathfrak{#1}}
\newcommand{\diag}{\textrm{diag}}
\def\inv{^{-1}}
\theoremstyle{plain}
\newtheorem{theorem}{Theorem}[section]
\newtheorem{corollary}[theorem]{Corollary}
\newtheorem{lemma}[theorem]{Lemma}
\newtheorem{proposition}[theorem]{Proposition}
\theoremstyle{definition}
\newtheorem{remark}[theorem]{Remark}
\newtheorem{definition}[theorem]{Definition}
\begin{document}
\title[Displacing Lagrangians in the manifolds of full flags in $\bb{C}^3$.]
{Displacing (Lagrangians) in the manifolds of full flags.} \author{Milena Pabiniak}
\address{Milena Pabiniak,
Department of Mathematics,  
University of Toronto, 
Toronto ON, Canada}
\email{pabiniak@math.toronto.edu}

\thanks{\today}

\maketitle
\begin{abstract}
In symplectic geometry a question of great importance is whether a (Lagrangian) submanifold is displaceable, that is, if it can be made disjoint from itself by the means of a Hamiltonian isotopy.
In these notes we analyze the coadjoint orbits of $SU(n)$ 
and their Lagrangian submanifolds that are fibers of the Gelfand-Tsetlin map. We use the coadjoint action to displace a large collection of these fibers. Then we concentrate on the case $n=3$ and apply McDuff's method of probes to show that ''most'' of the generic Gelfand-Tsetlin fibers are displaceable. ''Most'' means ''all but one'' in the non-monotone case, and means ''all but a $1$-parameter family'' in the monotone case. In the case of non-monotone manifold of full flags we present explicitly an unique non-displaceable Lagrangian fiber $(S^1)^3$. This fiber was already proved to be non-displaceable in \cite{NNU}. Our contribution is in displacing other fibers and thus proving the uniqueness. 
\end{abstract}
\section{Introduction}
In symplectic geometry one can observe a rigidity of intersections: certain submanifolds are forced to intersect each other in more points than an argument from algebraic or differential topology would predict. This motivates the following question: given a (usually Lagrangian) submanifold $L$ of a symplectic manifold $(M,\omega)$ does there exist a Hamiltonian diffeomorphism $\phi \in \,Ham\,(M, \omega)$ such that $\phi(L) \cap \L=\emptyset$. If such $\phi$ exists, we call $L$ {\bf displaceable}. Otherwise $L$ is called {\bf non-displaceable}.

Hamiltonian torus actions have interesting applications in answering this question. Generic fibers of momentum map are coistoropic submanifolds. In the case of toric actions (i.e. when dimension of torus acting is equal to half of the dimension of manifold) generic fibers are Lagrangians.

An important class of examples of symplectic manifolds is given by coadjoint orbits of Lie groups.   
A Lie group $G$ acts on $\lie{g}^*$, the dual of its Lie algebra, through the coadjoint action. Each orbit 
$M$ of the coadjoint action 
is naturally equipped with the Kostant-Kirillov symplectic form.  For example, when $G=SU(n)$
the group of (complex) unitary matrices, a coadjoint orbit can be identified with the set of traceless Hermitian matrices with
a fixed set of eigenvalues. It can be also viewed as a manifold of flags (full of partial, depending on the orbit) in $\bb{C}^n$ with appropriately scaled symplectic form.
The coadjoint action of the maximal torus of $SU(n)$ is Hamiltonian, but not toric (except $n=2$ case). This action can be extended to a ``Gelfand-Tsetlin'' action which is toric, but is defined only on an open dense subset of the orbit. Generic fibers of the Gelfand-Tsetlin momentum map are Lagrangians and one may ask about their displaceability. Nishinou, Nohara and Ueda proved in \cite{NNU} that at least one of these fibers in non-displaceable. This paper is a step towards answering the question of uniqueness of such fiber. We explicitly displace a large collection of fibers of momentum maps (for standard action and for Gelfand-Tsetlin action). In particular we prove that for non-monotone orbits of $SU(3)$ there is unique non-displaceable Gelfand-Tsetlin fiber.
\begin{theorem}\label{main}
 In the case of non-monotone regular $SU(3)$ coadjoint orbit through $diag\,(a,b,-a-b)\in \lie{su}(3)^*=$ the Gelfand-Tsetlin fiber above the point 
\begin{displaymath}
\begin{cases}
 (\frac a 2, -\frac a 2 ,0)& \textrm{ if }b<0\\
 (\frac{a+b}{ 2}, -\frac{a+b}{ 2} ,0)& \textrm{ if }b>0
\end{cases}
\end{displaymath}
is the unique fiber of the Gelfand-Tsetlin map that is non-displaceable.
\end{theorem}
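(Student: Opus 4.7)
The plan is to combine two displacement techniques to eliminate all Gelfand-Tsetlin fibers except the claimed one, and then cite \cite{NNU} for the non-displaceability of the survivor. First, I would exploit the non-abelian coadjoint $SU(3)$-action on the orbit. Although only the Cartan subgroup action is toric, one-parameter subgroups generated by suitable elements of $\lie{su}(3)$ outside the Cartan give Hamiltonian flows that mix Gelfand-Tsetlin fibers. By choosing such elements carefully I expect to displace every fiber whose GT image lies sufficiently close to those walls of the GT polytope that are not invariant under the standard torus. This coadjoint-action step should handle a large family of fibers in one stroke and is the natural extension of the first half of the paper to the displacement problem.

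Second, for the remaining fibers I would apply McDuff's method of probes to the three-dimensional GT polytope $\Delta$ of the regular orbit through $\diag(a,b,-a-b)$. A probe from a facet $F$ of $\Delta$ in a primitive integral direction $v$ transverse to $F$ displaces the torus fiber over any interior point of $\Delta$ that lies strictly less than halfway across the chord of $\Delta$ in the direction $v$, measured in integral-affine length. For each point $p\in\mathrm{int}\,\Delta$ other than the candidate, I would exhibit an explicit admissible probe $(F,v)$ through $p$ with $p$ in the first-half portion. Since the combinatorial type of $\Delta$ changes between $b<0$ and $b>0$ (the interlacing inequalities active on the facets are different, and the candidate "center" correspondingly jumps from $(\tfrac a2,-\tfrac a2,0)$ to $(\tfrac{a+b}{2},-\tfrac{a+b}{2},0)$), the enumeration of probes must be split into these two subcases.

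The uniqueness assertion then reduces to showing that the claimed point $p^\ast$ is the sole point in $\mathrm{int}\,\Delta$ not reachable by any admissible probe: at $p^\ast$ every rational chord through it splits into equal integral-affine halves, blocking the first-half condition for every choice of $(F,v)$. This is a finite linear-algebraic check once the facet structure is in hand, and should be handled separately in the $b<0$ and $b>0$ cases.

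I expect the main obstacle to be the combinatorial case analysis in the second step. The 3-dimensional polytope has several facets (cut out by the interlacing inequalities between eigenvalues of the upper-left $1\times 1$ and $2\times 2$ blocks), and for each $p$ the probe direction must satisfy both a primitivity condition at the starting facet and the strict half-length condition, uniformly over all of $\mathrm{int}\,\Delta\setminus\{p^\ast\}$. Coordinating this with the coadjoint-action step so that together they cover the whole polytope, and verifying that $p^\ast$ is genuinely blocked for \emph{every} admissible direction, is the technical heart of the proof.
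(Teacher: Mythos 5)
Your overall architecture --- coadjoint action to kill a large family of fibers, probes for the rest, and \cite{NNU} for the survivor --- is exactly the paper's. But the two displacement steps, as you describe them, have genuine gaps. The coadjoint-action step is mis-specified: the relevant criterion is not ``fibers whose image lies close to walls not invariant under the standard torus.'' The mechanism that actually works is that for an even permutation $\sigma$ the permutation matrix $P_{\sigma}$ lies in $SU(n)$, so conjugation by it is a Hamiltonian isotopy carrying the fiber $\mu^{-1}(x)$ of the \emph{standard} momentum map to $\mu^{-1}(\sigma x)$; since the only point of $\lie{t}^*_{st}$ fixed by all of $A_n$ is the origin, every fiber of $\mu$ over a nonzero point is displaced, hence so is every Gelfand--Tsetlin fiber over a point of $\cc{P}\setminus pr^{-1}(0)$. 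This reduces the problem from the three-dimensional polytope to the one-dimensional segment $\{(x_1,-x_1,0)\}\cap \cc{P}$, and it is precisely this reduction that makes the probe analysis tractable: one probe family (from $F_1$ or $F_4$) handles the half of the segment beyond the midpoint, and one (from $F_3$ if $b<0$, from $F_2$ if $b>0$) handles the half before it. Without that reduction you are committing to showing that probes alone cover essentially all of the interior minus $p^*$, a much heavier claim that you have not verified and that the paper never needs.

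Two further points. First, you treat only interior points of the polytope, but the theorem concerns all Gelfand--Tsetlin fibers; the fiber over the unique non-smooth ($4$-valent) vertex $(b,b,b)$ is a Lagrangian $S^3$, which no probe can reach. In the non-monotone case $pr(b,b,b)\neq 0$, so the permutation trick displaces it, but this must be said explicitly (the lower-dimensional isotropic fibers over the other boundary points are displaceable for soft reasons). Second, your plan to check that every rational chord through $p^*$ is bisected there is both unnecessary and underestimated: non-displaceability of $p^*$ is supplied by \cite{NNU}, so uniqueness requires only that every \emph{other} fiber be displaced, and in any case such a check is not a finite linear-algebra computation --- it is a case analysis over all facets and all integrally transverse integer directions, which the paper carries out only in the monotone case, where it is genuinely needed.
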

\textbf{Organization}. Section \ref{Preliminaries} provides background about standard action and the Gelfand-Tsetlin action.
\\
\textbf{Acknowledgments.} The author is very grateful to Yael Karshon for suggesting this problem and helpful conversations during my work on this project. 
The author also would like to thank Leonid Polterovich and Strom Borman for useful discussions.

\section{Preliminaries}\label{Preliminaries}
\subsection{Standard action}\label{standardaction}
Consider the Lie group $G=SU(n)$. We identify the dual of its Lie algebra, $\lie{su}(n)^*$, with the vector space of $n \times n$ traceless Hermitian matrices. 
 Choose the maximal torus of $SU(n)$ to be 
$T_{st}=\{ \diag (e^{it_1},\ldots,e^{it_{n-1}},e^{-i(t_1+\ldots+t_{n-1})})\}$, 
and the positive Weyl chamber to consist of diagonal Hermitian matrices with non-increasing diagonal entries. Let 
$\lambda=\diag (\lambda_1,\ldots, \lambda_n)$, $\lambda_1>\ldots>\lambda_n$, $\sum \lambda_i=0$, be a point in the interior of positive Weyl chamber. The coadjoint action of $SU(n)$ on $\lie{su}(n)^*$ is by conjugation.
Let $M$ be the orbit of coadjoint action of $SU(n)$ on $\lambda$. It is a symplectic manifold with Kostant-Kirillov symplectic form, of dimension $\frac 1 2 n(n-1)$.
The coadjoint action is Hamiltonian with momentum map the inclusion $\Phi:M \hookrightarrow \lie{su}^*(n)$. 
The action of $T_{st}$ (subaction of coadjoint action) is also Hamiltonian  with momentum map 
$\mu:M \rightarrow \lie{t}^*_{st} \cong \bb{R}^{n-1}=\{(x_1,\ldots,x_n)\in \lie{t}^*_{st}\cong \bb{R}^n|\,\,\sum x_i=0\}\subset \lie{t}^*_{st}$ 
sending a matrix in $M$ to its diagonal entries: $\mu([a_{ij}])=(a_{11},\ldots,a_{nn})$. 
Denote by $\cc{Q}=\cc{Q}_{\lambda} \subset \bb{R}^{n-1}$ the polytope that is the image of momentum map $\mu$. Left picture on figure \ref{momentumimages} presents this polytope with additional data, so called ``x-ray`` \footnote{The x-ray of $(M,\omega,\phi)$ is the 
collection of convex polytopes 
 $\phi(X)$ over all connected components $X$ of $M^K$ for some subtorus $K$ of $T$ (for more details see \cite{To}).}, for the case $n=3$.
\begin{figure}\label{momentumimages}
 \includegraphics[width=0.2\textwidth]{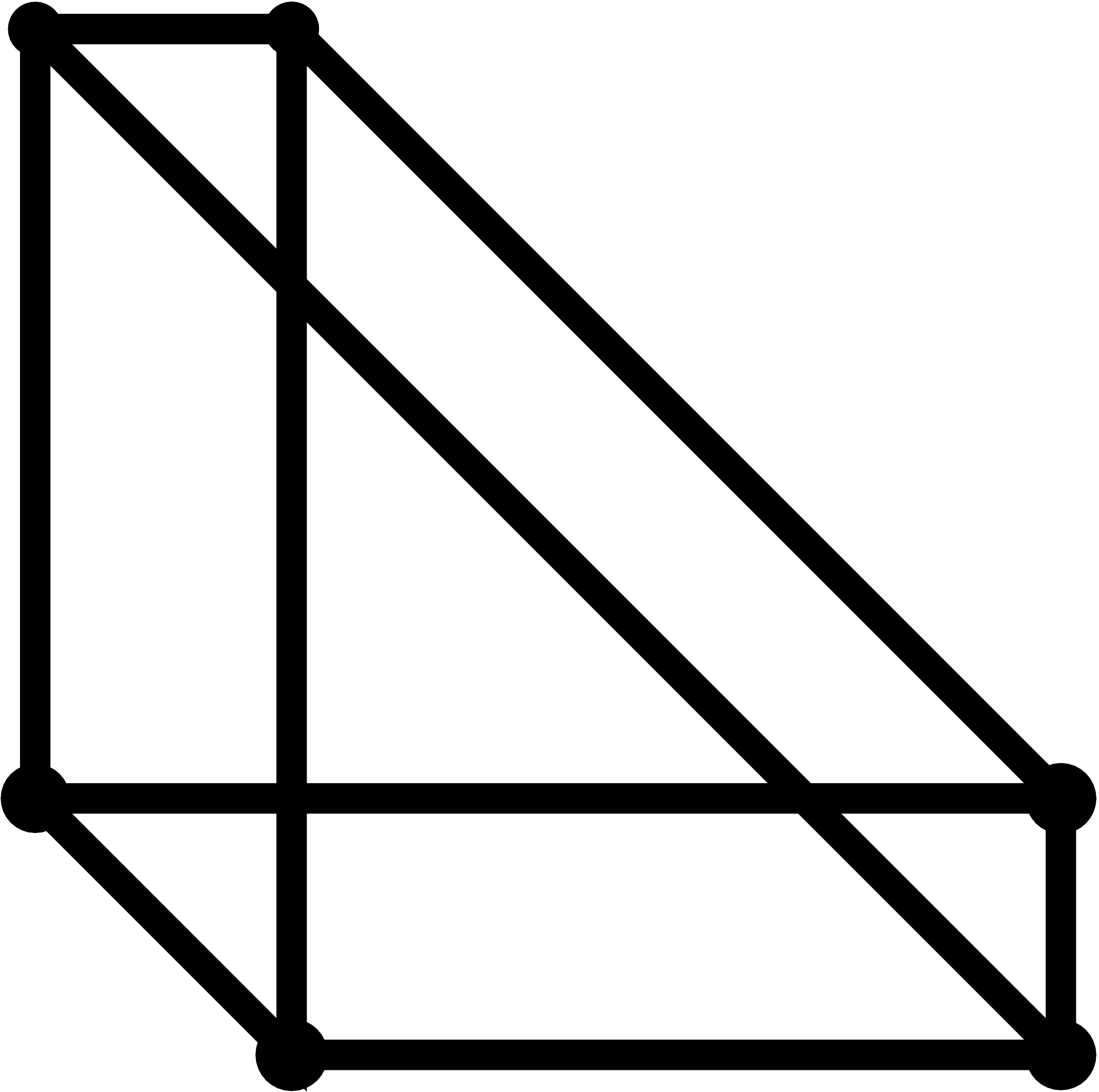} \hspace{1in} \includegraphics[width=0.2\textwidth]{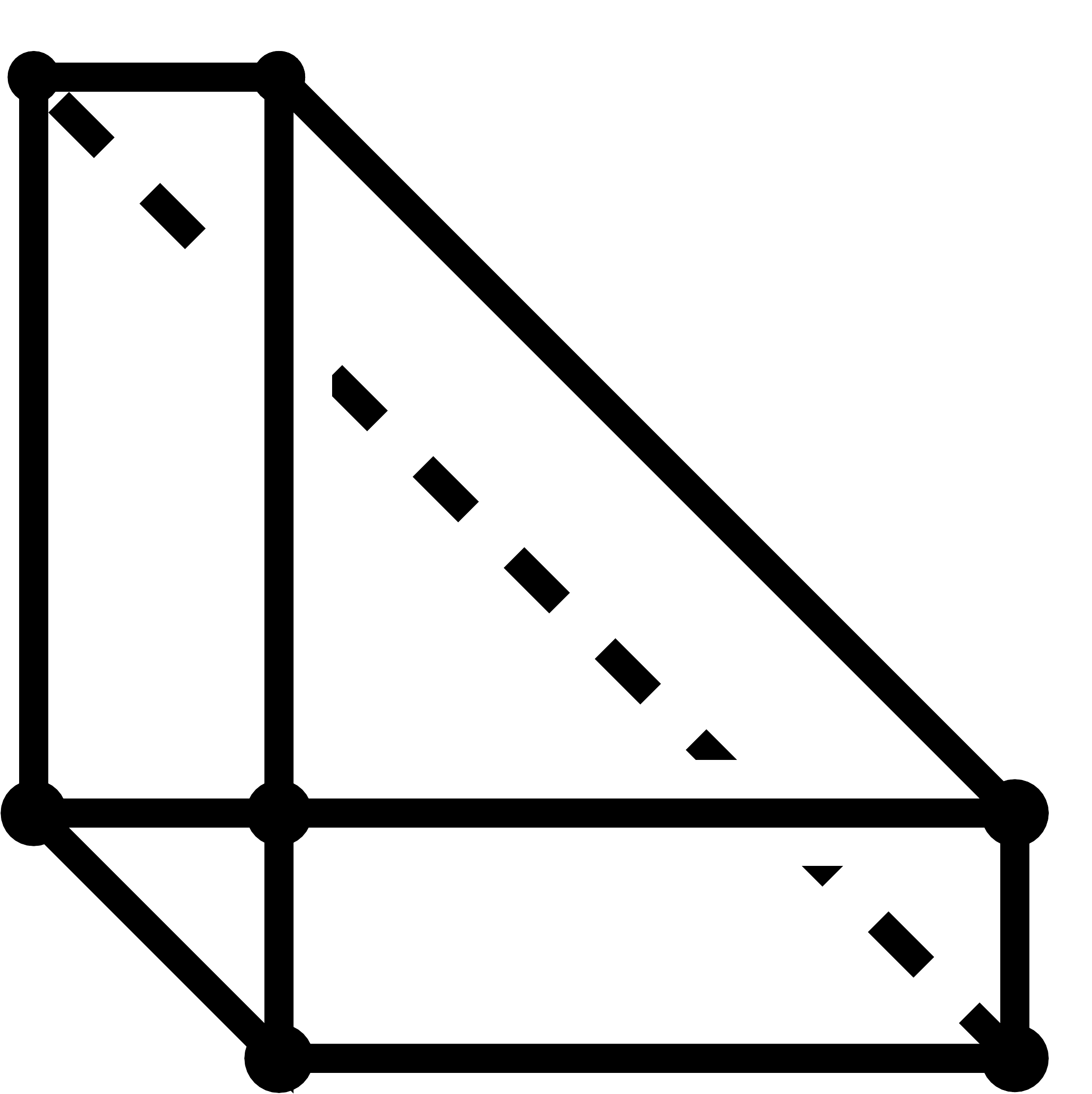}
\caption{The x-ray for the standard $T_{st}$ action and the image of the Gelfand-Tsetlin functions for a regular $SU(3)$ orbit}
\label{momentumimages}
\end{figure}

\subsection{The Gelfand-Tsetlin system}\label{Gelfand-TsetlinSystem}
In this subsection we recall the Gelfand-Tsetlin (sometimes spelled Gelfand-Cetlin, or Gelfand-Zetlin) system of action coordinates, which originally appeared in \cite{GS1}. It is related to the classical Gelfand-Tsetlin polytope introduced in \cite{GTs}.
There are many references describing this system, for example \cite{P4}, \cite{GS1}, \cite{K}. Here we concentrate only on the $SU(3)$ case.

For any matrix $A \in M$, and any $k=1,\ldots, n-1$ let $\lambda^{(k)}_1(A) \geq \ldots \geq \lambda^{(k)}_k(A)$ denote the eigenvalues of $k \times k$ top left minor of $A$.
This defines $\frac 1 2 n(n-1)$ continuous, not everywhere smooth functions from $M$ to $\bb{R}$. The eigenvalues depend smoothly on matrix entries but ordering them may violate the smoothness at points where eigenvalues coincide. The system $\Lambda=\{\lambda^{(k)}_j;\;\;1 \leq k \leq n-1,\,1\leq j \leq k\}\colon M \rightarrow \bb{R}^{\frac 1 2 n(n-1)}$ is called the {\bf Gelfand-Tsetlin} system of action coordinates. Let 
$$U:=\{A \in M;\,\,\forall_{k}\; \lambda^{(k)}_1(A) > \ldots> \lambda^{(k)}_k(A)\} \subset M.$$
The Gelfand-Tsetlin functions are smooth on $U$ and there they integrate to an action of a torus $T_{GT} \cong (S^1)^{\frac 1 2 n(n-1)}$ called the {\bf Gelfand-Tsetlin action} (\cite{GS1},\cite{P4}). This action is Hamiltonian with momentum map the restriction of $\Lambda$ to $U$. The image  $\Lambda(M)$ is called the {\bf Gelfand-Tsetlin polytope} (see right picture in Figure \ref{momentumimages}). We denote it by $\cc{P}=\cc{P}_{\lambda}$. 

The standard action of the maximal torus is a subaction of the Gelfand-Tsetlin action. Therefore there is a projection map 
$pr: \lie{t}_{GT}^*\cong \bb{R}^{\frac 1 2 n(n-1)} \rightarrow   \bb{R}^n\cong  \lie{t}_{st}^* $ given by
$$ pr (\{\lambda^{(j)}_l\})=\Bigl( \lambda^{(1)}_1,\,(\lambda^{(2)}_1+\lambda^{(2)}_2-\lambda^{(1)}_1)\,,
 \ldots ,\, \sum_i \lambda^{(n-1)}_i\, -\sum_i \lambda^{(n-2)}_i, \sum_i \lambda_i\, -\sum_i \lambda^{(n-1)}_i \Bigr),$$
which maps $\cc{P}$ to $\cc{Q}$. Note that $\mu =pr \circ \Lambda$.

\subsection{The Gelfand-Tsetlin polytope}
The classical mini max principle (see for example Chapter I.4 in \cite{CH}) implies that 
\begin{equation*}
 \lambda^{(l+1)}_j(A) \geq  \lambda^{(l)}_j(A) \geq \lambda^{(l+1)}_{j+1}(A).
\end{equation*}
These inequalities, taken over $l=1,\ldots,n-1$, $j=1,\ldots,l$, (with the convention that $\lambda^{(n)}_{j}(A)=\lambda_j$), cut out a polytope in $\bb{R}^{(1/2)\,n(n-1)}.$ In fact this is exactly the Gelfand-Tsetlin polytope $\cc{P}=\Lambda(M)$ (\cite{GS1},\cite{P4}).

\section{Displacing through Hamiltonian isotopies}\label{displacing}
 In this section we prove that certain subsets of the coadjoint orbit are displaceable by means of Hamiltonian isotopies.
\subsection{Displacing fibers of the standard action}
We continue to denote by $M$ a regular $SU(n)$ coadjoint orbit through $(\lambda_1,\ldots,\lambda_n)$, $\sum\,\lambda_i=0$. Recall from Section \ref{standardaction} that the standard action of the maximal torus of $SU(n)$ is Hamiltonian, with momentum map $\mu$.

The fibers of $\mu$ above interior points of $\cc{Q}$ are coistoropic submanifolds of dimension $n(n-2)$, (so not Lagrangian except if $n=2$). 
\begin{proposition}\label{centralnondisplaceable}
 The fiber of $\mu$ above interior point of $\cc{Q}$, $x=(x_1,\ldots,x_n)\neq(0,\ldots,0)$, is displaceable.
\end{proposition}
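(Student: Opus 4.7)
The plan is to use the \emph{full} coadjoint action of $SU(n)$ on $M$, of which the $T_{st}$-action is only a subaction. This action is Hamiltonian with momentum map the inclusion $\Phi\colon M\hookrightarrow \lie{su}(n)^*$, and in particular for any $\xi \in \lie{su}(n)$ the function $H^{\xi}(A)=\iprod{\Phi(A)}{\xi}=\mathrm{tr}(A\xi)$ generates precisely the flow of the coadjoint action of $\exp(t\xi)$. Since $SU(n)$ is compact and connected, every $g\in SU(n)$ is an exponential, so conjugation by any single $g$ is realized as the time-$1$ map of a Hamiltonian isotopy of $M$. It therefore suffices to exhibit one $g\in SU(n)$ with $g\cdot\mu\inv(x)\cdot g\inv$ disjoint from $\mu\inv(x)$.

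The natural choice of $g$ comes from the Weyl group. Because the restriction map $\pi\colon \lie{su}(n)^*\to\lie{t}_{st}^*$ intertwines the $SU(n)$-coadjoint action with the $N(T_{st})$-action on $\lie{t}_{st}^*$, for any lift $g\in N(T_{st})\subset SU(n)$ of a Weyl element $w\in W\cong S_n$ one has
\begin{equation*}
\mu(gAg\inv) \;=\; w\cdot\mu(A),
\end{equation*}
where $W$ acts on $\lie{t}_{st}^*\cong\{x\in\bb{R}^n:\sum x_i=0\}$ by permuting coordinates. Consequently $g\cdot\mu\inv(x)\cdot g\inv=\mu\inv(w\cdot x)$. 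Now, $x\neq 0$ together with $\sum_i x_i=0$ forces $x$ to have at least two distinct coordinates $x_i\neq x_j$; I pick $w=(i\,j)$, and choose any lift $g\in N(T_{st})\cap SU(n)$ (for instance the permutation matrix $P_{(ij)}$ multiplied by a diagonal sign entry to correct the determinant). Then $w\cdot x\neq x$, so $\mu\inv(w\cdot x)$ and $\mu\inv(x)$ are disjoint, and the resulting Hamiltonian isotopy displaces $\mu\inv(x)$.

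I do not anticipate any serious obstacle. The only two routine points are checking that a transposition can be lifted to an element of $SU(n)$ rather than merely $U(n)$ --- handled by the sign adjustment above --- and verifying the Weyl equivariance $\pi\circ\mathrm{Ad}_g^*=w\circ\pi$, which is immediate from the definition of the coadjoint action once $g$ normalizes $T_{st}$. The moral content of the proposition is simply that $\mu$ is Weyl-equivariant and the origin is the only point in $\cc{Q}$ fixed by all of $W$; anything else admits a Weyl reflection that moves its fiber off itself, and this reflection is realized by a Hamiltonian isotopy.
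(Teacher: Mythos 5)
Your proof is correct and follows essentially the same route as the paper: conjugation by a special unitary lift of a coordinate permutation is a Hamiltonian isotopy (by connectedness of $SU(n)$), and since $x\neq 0$ with $\sum x_i=0$ some permutation moves $x$, hence moves $\mu\inv(x)$ to a disjoint fiber. The only cosmetic difference is that the paper restricts to even permutations so that $P_{\sigma}\in SU(n)$ outright, whereas you use a transposition with a determinant-correcting diagonal factor; both devices work.
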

\begin{proof}
For any even permutation $\sigma \in A_n \subset S_n$ its matrix, $P_{\sigma}$ is in $SU(n)$. Therefore conjugating with $P_{\sigma}$ is a Hamiltonian isotopy ($SU(n)$ is connected). Note that the diagonal entries change in the following way under this conjugation:
$$\mu (P_{\sigma}\,A\,P_{\sigma})=\sigma (\mu(A)).$$
Thus $\mu (P_{\sigma}\,A\,P_{\sigma})=\mu(A)$ for all $\sigma \in A_n$ if and only if $a_{11}=\ldots=a_{nn}$.
Therefore for any $x=(x_1,\ldots,x_n)\neq(0,\ldots,0)$, there exists a $\sigma \in A_n$ such that a Hamiltonian isotopy $P_{\sigma}$ displaces the fiber $\mu \inv (x)$.
\end{proof}


\subsection{Displacing fibers of the Gelfand-Tsetlin system in general case}
In this subsection we will analyze the problem of displacing generic fibers of the Gelfand-Tsetlin system. The fibers above interiors points of $\cc{P}$ are Lagrangian tori $(S^1)^{(1/2)\,n(n-1)}$. The fibers above boundary points which are contained in $U$ are isotropic and of dimension smaller then $\frac 1 2 \dim M$, therefore they are displaceable. The fibers above boundary points not contained in $U$ might be of larger dimension. For example, in $n=3$ case the fiber above the unique not smooth point of $\cc{P}$ (unique $4$-valent vertex) is a Lagrangian sphere $S^3$ (\cite{I}). 

Let $\cc{W}=pr \inv ((0,\ldots,0))$ where $pr$ is the projection $pr: \lie{t}_{GT}^* \rightarrow   \lie{t}_{st}^* $ satisfying $pr \circ \Lambda=\mu$ defined in Section \ref{Gelfand-TsetlinSystem}.
\begin{proposition}\label{stdnondisplace}
 The Gelfand-Tsetlin fiber above any point $x=(x_1, \ldots,x_n)\in \, \cc{P} \setminus \cc{W}$ is displaceable. 
\end{proposition}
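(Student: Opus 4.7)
The plan is to reduce the statement directly to Proposition \ref{centralnondisplaceable}. The key observation is that since $\mu = pr \circ \Lambda$, the Gelfand-Tsetlin fiber over $x$ is contained in the standard torus fiber over $pr(x)$:
\[
\Lambda^{-1}(x) \;\subseteq\; \mu^{-1}(pr(x)).
\]
By the definition of $\cc{W} = pr^{-1}((0,\ldots,0))$, the hypothesis $x \in \cc{P}\setminus \cc{W}$ is precisely the statement $pr(x) \neq (0,\ldots,0)$. Hence it suffices to exhibit a Hamiltonian isotopy of $M$ displacing the larger set $\mu^{-1}(pr(x))$; the same isotopy then automatically displaces the smaller set $\Lambda^{-1}(x)$.

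The required isotopy is the conjugation trick already used in the proof of Proposition \ref{centralnondisplaceable}. Since the alternating group $A_n$ permutes coordinates on the zero-sum hyperplane $\{\sum y_i=0\}\subset\bb{R}^n$ and, for $n\geq 3$, fixes only the origin, there exists $\sigma\in A_n$ with $\sigma(pr(x))\neq pr(x)$. Then conjugation by the permutation matrix $P_\sigma\in SU(n)$ is a Hamiltonian diffeomorphism sending $\mu^{-1}(pr(x))$ to $\mu^{-1}(\sigma\cdot pr(x))$, a disjoint fiber, and hence displaces $\Lambda^{-1}(x)$. I do not expect any essential obstacle; the only technical point worth remarking on is that Proposition \ref{centralnondisplaceable} is stated for interior points of $\cc{Q}$, while in the present situation $pr(x)$ could a priori lie on the boundary of $\cc{Q}$. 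This causes no difficulty, because the conjugation argument works verbatim for any nonzero point of $\cc{Q}$: disjointness of the two fibers is immediate once $\sigma(pr(x))\neq pr(x)$, independently of whether $pr(x)$ is interior or boundary.
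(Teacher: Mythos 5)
Your proof is correct and follows essentially the same route as the paper: the inclusion $\Lambda^{-1}(x)\subseteq\mu^{-1}(pr(x))$ plus the conjugation by $P_\sigma$ from Proposition \ref{centralnondisplaceable}. Your added remark that the argument applies verbatim to nonzero boundary points of $\cc{Q}$ (a case the paper's citation of Proposition \ref{centralnondisplaceable}, stated only for interior points, technically leaves open) is a worthwhile clarification.
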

\begin{proof}
 Note that this fiber, $\Lambda \inv (x)$, is contained in the fiber $\mu \inv (pr(x))$. Our assumptions guarantee that $pr(x) \neq (0,\ldots,0)$ therefore conjugation with appropriate matrix $P_{\sigma}$ displaces the whole set $\mu \inv (pr (x))$ off itself (see Proposition \ref{centralnondisplaceable}). Thus its subset, $\Lambda \inv (x)$, is displaceable as well.
\end{proof}
\subsection{Displacing fibers of the Gelfand-Tsetlin system in the case $n=3$}
From now on we concentrate on the case $n=3$. First we describe explicitly the Gelfand-Tsetlin polytope for $SU(3)$ coadjoint orbit through a point $\diag (a,b,-a-b)$, $a >b$. 
The Gelfand-Tsetlin polytope $\cc{P}$ 
 (presented on the Figure \ref{polytope}), consists of points $x=(x_1, x_2,x_3)\in \bb{R}^3\cong \lie{t}^*_{GT}$ satisfying the following inequalities (\cite{GS2}, \cite{P4})
$$a\geq x_1 \geq b,$$
$$b\geq x_2 \geq -a-b,$$
$$x_1\geq x_3 \geq x_2.$$

Denote the facets of $\cc{P}$:\begin{align*}
& \textrm{facet}&&&\textrm{ primitive inward normal}\\
 &F_1:&\langle x,e_1\rangle &= a&(-1,0,0)\\
 &F_2:&\langle x,-e_1\rangle &= b& (1,0,0)\\
&F_3:&\langle x,e_2\rangle &= b & (0,-1,0)\\
&F_4:&\langle x,-e_2\rangle &= a+b& (0,1,0)\\
&F_5:&\langle x,e_3-e_1\rangle &= 0& (1,0,-1)\\
&F_6:&\langle x,e_2-e_3\rangle &= 0& (0,-1,1)
\end{align*}

\begin{figure}
 \includegraphics[width=.5\textwidth]{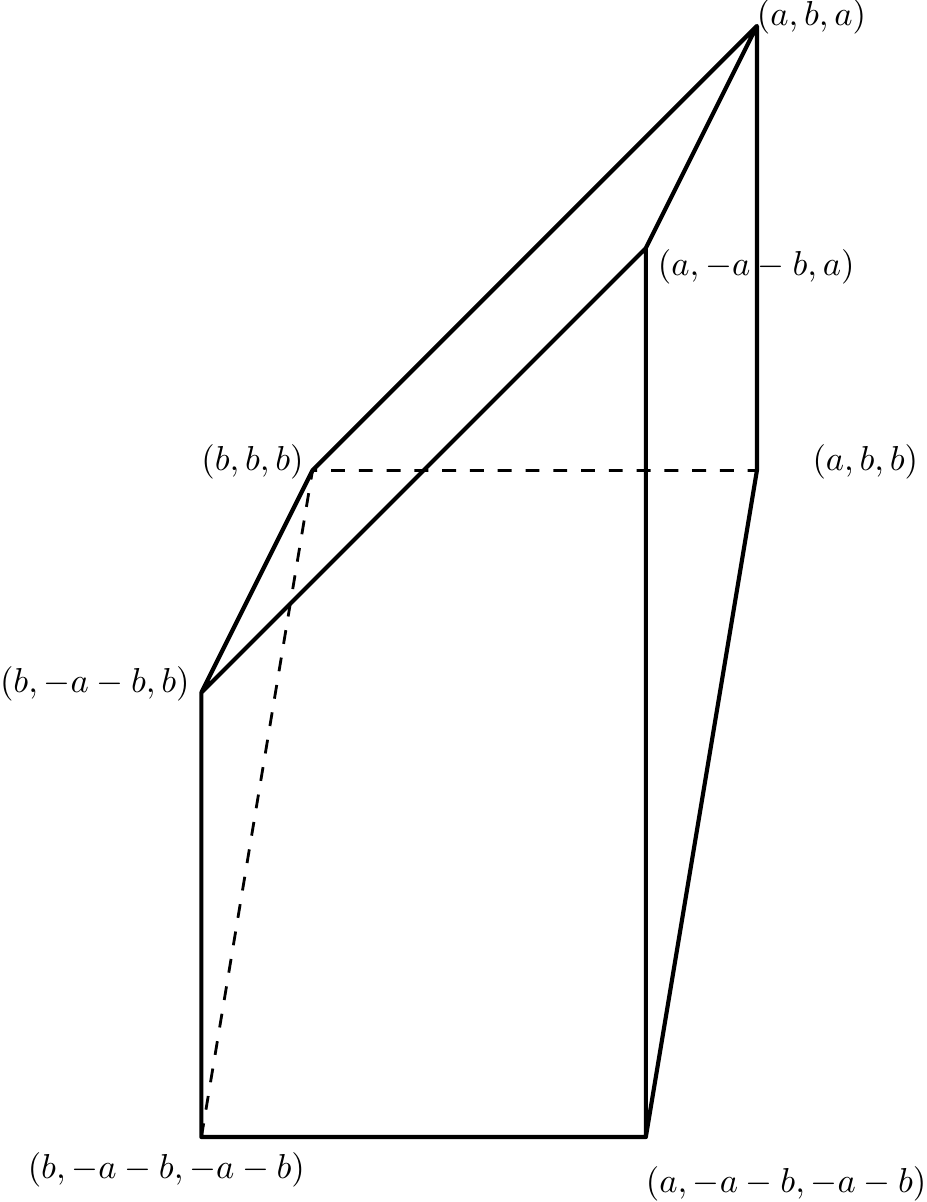}\caption{ The Gelfand-Tsetlin polytope for regular $SU(3)$ orbit through $(a,b,-a-b)$.}
\label{polytope}
\end{figure}

To displace more Gelfand-Tsetlin fibers we use McDuff's method of probes.
Recall the necessary definitions and lemmas.
\begin{definition}\cite{MD}
 Let $w$ be a point of some facet $F$ of a rational polytope $\Delta$ and $\alpha \in \bb{Z}^n$ be integrally transverse to $F$. The {\bf probe } $p_{F,\alpha}(w)=p_{\alpha}(w)$ with the direction $\alpha \in \bb{Z}^n$ and initial point $w \in F$ is the half open line segment consisting of $w$ together with the points in int$\Delta$ that lie on the ray from $w$ in the direction of $\alpha$.
\end{definition}
\begin{lemma}\cite[Lemma 2.4]{MD} \label{probelemma} Let $\Delta$ be a smooth moment polytope. Suppose that a point $u \in \textrm{ int }\Delta$ lies on the probe $p_{F,\alpha}(w)$. Then if $w$ lies in the interior of $F$ and $u$ is less then halfway along $p_{F,\alpha}(w)$, the fiber $L_u$ above $u$ is displaceable.  
\end{lemma}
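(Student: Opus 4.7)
The plan is to reduce the displaceability of $L_u$ to the elementary fact that a circle in a $2$-disk of $\bb{C}$ can be displaced by a rotationally invariant, compactly supported Hamiltonian exactly when the circle bounds an area less than half the area of the disk. The reduction uses a $T^n$-equivariant local model near $\mu \inv (F)$.

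First, since $w \in \textrm{int}\,F$ and $\Delta$ is smooth, the isotropy subgroup at each point of $\mu \inv (w)$ is the circle $S^1_F \subset T^n$ generated by the unique primitive inward normal to $F$. The equivariant symplectic neighborhood theorem for toric manifolds then yields a $T^n$-equivariant symplectomorphism from a $T^n$-invariant neighborhood of $\mu \inv (w)$ onto a neighborhood of the zero section in $\bb{C} \times T^* T^{n-1}$ with its standard product toric structure, carrying $F$ to $\{z=0\}$. Because $\alpha$ is integrally transverse to $F$, it can be completed together with primitive edge vectors of $F$ at $w$ to a $\bb{Z}$-basis of $\bb{Z}^n$; in the resulting coordinates the $S^1$-subaction in direction $\alpha$ is the standard rotation of the $\bb{C}$-factor, with moment map $\frac{1}{2}|z|^2$. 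Writing the probe as $\{w + t\alpha : 0\leq t\leq T\}$ and $u = w + s\alpha$, the fiber $L_u$ is identified with $\{|z|^2 = 2s\} \times T^{n-1}$, and after extending the local model along the open probe (see the obstacle below) the preimage of a tubular neighborhood of the open probe is identified with $\{|z|^2 < 2T\}\times T^{n-1}$.

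Within this product, one constructs a compactly supported Hamiltonian of the form $H(z) = \rho(|z|^2)$ whose flow rotates the circle $\{|z|^2=2s\}$ onto a disjoint curve inside the open disk $\{|z|^2 < 2T\}$. A classical calculation in $\bb{C}$ produces such a $\rho$ precisely when the enclosed area satisfies $2\pi s < \frac{1}{2}(2\pi T)$, i.e., when $s < T/2$, which is exactly the half-way hypothesis. Extending $H$ by torus-invariance in the $T^{n-1}$-factor and by zero to all of $M$ yields a Hamiltonian isotopy displacing $L_u$.

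The main obstacle is justifying the extension of the local model all the way along the open probe: the equivariant Darboux theorem gives only a small neighborhood of $\mu \inv (w)$, and one must verify that no isotropy other than $S^1_F$ appears along $\{w+t\alpha : 0 < t < T\}$ in order to piece together a $T^n$-equivariant symplectic cross-section of the required radius $\sqrt{2T}$. This rests on the smoothness of $\Delta$ and on the probe remaining in $\textrm{int}\,\Delta$; once this identification is secured, the remaining two-dimensional displacement estimate is elementary.
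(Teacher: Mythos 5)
The paper itself does not prove this lemma; it is quoted from \cite{MD}, and your outline does follow the architecture of McDuff's argument (reduce to a standard toric local model over a neighborhood of the probe, then displace a circle in a two-dimensional disk factor). However, there is a genuine error at the two-dimensional heart of your write-up: a compactly supported Hamiltonian of the form $H(z)=\rho(|z|^2)$ is rotationally invariant, so its flow is $z\mapsto e^{\,it\rho'(|z|^2)}z$ and preserves \emph{every} circle centered at the origin; no choice of $\rho$ can move $\{|z|^2=2s\}$ off itself, so the ``classical calculation'' you invoke does not exist in this form. The correct two-dimensional ingredient is that an embedded circle bounding area $A$ in a disk of area $B$ with $A<B/2$ is displaceable by a compactly supported, necessarily non-radial, Hamiltonian isotopy of the disk: the closed disk it bounds admits a disjoint embedded copy of the same area in the complementary annulus (which has area $B-A>A$), and compactly supported area-preserving Hamiltonian diffeomorphisms act transitively on embedded disks of a fixed area. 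With this replacement, your inequality $2\pi s<\tfrac12(2\pi T)$, i.e.\ $s<T/2$, is exactly the halfway criterion in the statement.

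Two further points need care. First, ``extending $H$ by zero to all of $M$'' is not legitimate as written, since $H$ is not compactly supported in the transverse $T^*T^{n-1}$ directions; one must multiply by a cutoff $\chi(x_2,\dots,x_n)$ in the remaining action variables. This is harmless because $X_\chi$ is tangent to the torus fibers, so the modified flow still preserves the action variables and, on the slice through $L_u$ where $\chi\equiv1$, projects to the displacing map of the disk. Second, the step you correctly flag as the main obstacle --- identifying the preimage of a neighborhood of the whole half-open probe with the standard model over $\{\tfrac12|z|^2<T\}$ --- is the actual crux of McDuff's proof; it does not follow merely from the absence of extra isotropy along the open probe, but from the Delzant-type uniqueness statement that the preimage of a convex open subset of $\Delta$ whose closure meets $\partial\Delta$ only in the interior of $F$ is equivariantly symplectomorphic to the standard model over that set. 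This is precisely the point the paper leans on when it remarks that smoothness of $\Delta$ may be weakened to the existence of such a chart containing the preimage of the probe, so your proof should carry it out rather than leave it as an announced obstacle.
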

The lemma is true for any notion of the length along a line. We will use the affine distance.

In the above lemma $\Delta$ is a smooth polytope, but this condition is not really necessary. The only thing we need is the existence of a Darboux chart on $M$ containing the whole preimage of the probe. This is true in our case. The only non-smooth point of the Gelfand-Tsetlin polytope $\cc{P}$ is the vertex $(b,b,b).$ As noted above, the set $U=\Lambda \inv (\cc{P} \setminus \{(b,b,b)\})$ is equipped with the smooth Gelfand-Tsetlin action. Therefore it is a toric (not compact) manifold and we can apply the above lemma for our polytope $\cc{P}$.

\begin{lemma}\label{fromf1}
 The fiber above a point $x \in \textrm{int }\cc{P}$ such that $x_3 \leq b$ and $a>x_1 > \frac{ a+b}{ 2}$ is displaceable by a probe from $F_1$ in the direction $(-1,0,0)$.
\end{lemma}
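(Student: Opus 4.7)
The plan is to apply Lemma \ref{probelemma} directly with the probe data given in the statement. First I would pick the initial point $w$ on $F_1$ that lies on the ray through $x$ in direction $(-1,0,0)$; this forces $w=(a,x_2,x_3)$. I would then check that $\alpha=(-1,0,0)$ is integrally transverse to $F_1$: the lattice of $F_1$ (which sits in $\{x_1=a\}$) is generated by $e_2,e_3$, and together with $\alpha=-e_1$ these span $\bb{Z}^3$, so transversality holds.

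Next I would verify that $w$ lies in the \emph{interior} of $F_1$. The edges of $F_1$ come from intersecting $F_1$ with $F_3,F_4,F_5,F_6$, which on $F_1$ give the inequalities $-a-b<x_2<b$ and $x_2<x_3<a$. Since $x$ was assumed to be in $\textrm{int}\,\cc{P}$, all of these hold automatically for the coordinates $x_2,x_3$, so $w$ is in $\textrm{int}\,F_1$.

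Then I would trace the probe: it consists of points $(a-t,x_2,x_3)$ for $t\geq 0$, and exits $\cc{P}$ at $t^{*}=\min(a-b,\,a-x_3)$ (the other facets $F_3,F_4,F_6$ are not hit because neither $x_2$ nor $x_3$ change along the probe). The hypothesis $x_3\leq b$ gives $a-x_3\geq a-b$, hence $t^{*}=a-b$ and the probe exits through $F_2$. The affine distance from $w$ to $x$ equals $a-x_1$, while the total affine length of the probe is $a-b$, so $x$ is strictly less than halfway along it precisely when $a-x_1<(a-b)/2$, i.e.\ $x_1>(a+b)/2$, which is exactly the second hypothesis. The condition $x_1<a$ ensures $x\notin F_1$, so $x$ is an interior point of the probe.

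The only subtlety (and not really an obstacle) is the justification that Lemma \ref{probelemma} is applicable even though $\cc{P}$ has the non-smooth vertex $(b,b,b)$: as already noted in the paragraph preceding the lemma, the preimage of any probe used here lies in the toric open set $U$, on which a smooth Darboux chart covering the whole probe exists, so McDuff's argument goes through verbatim. Hence the probe lemma gives displaceability of the Gelfand-Tsetlin fiber above $x$.
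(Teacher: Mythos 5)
Your proof is correct and follows the same route as the paper: the same probe from $w=(a,x_2,x_3)$ in direction $(-1,0,0)$, with the halfway condition reducing to $x_1>\frac{a+b}{2}$. You simply spell out details the paper leaves implicit (why $w\in\textrm{int}\,F_1$, and that $x_3\leq b$ is exactly what forces the probe to exit through $F_2$ with length $a-b$ rather than through $F_5$), which is a faithful elaboration rather than a different argument.
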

\begin{proof}Note that $(-1,0,0)$ is integrally transverse to $F_1$.
 Take $x\in \textrm{int }\cc{P}$ such that $x_3 \leq b$ and $a>x_1 > \frac{ a+b}{ 2}$. Then $w=(a,x_2,x_3)$ is in the interior of the facet $F_1$ and the probe from $w$ in the direction $(-1,0,0)$ is the set $\{(a-t,x_2,x_3);\,t \in [0,a-b)\}$.
As $x_1 > a-\frac{a-b}{2}=\frac {a+b}{ 2}$, the point $(x_1,x_2,x_3)$ is displaceable by Lemma \ref{probelemma}.
\end{proof}
Due to the symmetry of $\cc{P}$ we can similarly show:
\begin{lemma}\label{fromf4}
 The fiber above a point $x \in \textrm{int }\cc{P}$ such that $x_3 \geq b$ and $-a-b<x_2 <-\frac{ a}{ 2}$ is displaceable by a probe from $F_4$ in the direction $(0,1,0)$.
\end{lemma}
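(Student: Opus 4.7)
The plan is to follow exactly the template of Lemma \ref{fromf1}, since the hypothesis and conclusion are related by the obvious reflection symmetry of $\cc{P}$ exchanging the roles of $F_1,F_2,F_5$ (inequalities on $x_1$ and $x_1 \geq x_3$) with $F_4,F_3,F_6$ (inequalities on $x_2$ and $x_2 \leq x_3$). Concretely, the facet $F_4 = \{x_2 = -a-b\}$ has primitive inward normal $(0,1,0)$, so that direction is automatically integrally transverse to $F_4$. I would therefore start from the point $w = (x_1,-a-b,x_3)$ and the probe $p_{F_4,(0,1,0)}(w) = \{(x_1,-a-b+t,x_3) : t \in [0,T)\}$.

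Next I would check that $w$ lies in the relative interior of $F_4$. The only thing to verify is that none of the other facet inequalities becomes an equality at $w$: the inequalities $b < x_1 < a$ and $x_3 < x_1$ hold since $x \in \textrm{int}\,\cc{P}$, while $x_3 > -a-b$ follows from the hypothesis $x_3 \geq b$ together with the regularity condition $b > -a-b$. So $w$ is interior to $F_4$.

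Then I would compute the length $T$ of the probe by determining which of the remaining inequalities is violated first as $t$ increases. Only the constraints $x_2 \leq b$ (from $F_3$) and $x_2 \leq x_3$ (from $F_6$) depend on the moving coordinate, giving thresholds $t = a+2b$ and $t = a+b+x_3$ respectively. Since the hypothesis gives $x_3 \geq b$, one has $a+b+x_3 \geq a+2b$, so the probe exits at $t = T = a+2b$ (hitting $F_3$, possibly at its intersection with $F_6$ when $x_3 = b$).

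Finally, the point $x$ lies on the probe at parameter $t_x = x_2 - (-a-b) = x_2 + a + b$, and the hypothesis $x_2 < -a/2$ rearranges to $t_x < (a+2b)/2 = T/2$, i.e. $x$ is strictly less than halfway along the probe in affine distance. Applying Lemma \ref{probelemma}, as justified in the remark following it for our mildly non-smooth polytope $\cc{P}$, then gives the desired displaceability. There is no real obstacle here; the only thing to be careful about is the borderline case $x_3 = b$, where the probe ends at the edge $F_3 \cap F_6$, but this still falls within the scope of Lemma \ref{probelemma} since only the position of $w$ (in $\textrm{int}\,F_4$) and the length comparison matter.
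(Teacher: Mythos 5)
Your proposal is correct and is exactly the argument the paper has in mind: the paper omits the proof, saying only that it follows from Lemma \ref{fromf1} ``due to the symmetry of $\cc{P}$,'' and your explicit computation (probe of length $a+2b$ from $w=(x_1,-a-b,x_3)\in \textrm{int}\,F_4$, with the $F_6$ constraint not binding first because $x_3\geq b$, and $x_2<-a/2$ giving the halfway condition) is precisely that symmetric version of the proof of Lemma \ref{fromf1}. No gaps.
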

Note that the fibers above points on the boundary of $\cc{P}$, other then the point $(b,b,b)$, are isotropic tori of dimension less then $3=\frac 1 2 \dim M$, so they are also displaceable. The fiber $\Lambda \inv (b,b,b)$ is a Lagrangian $S^3$ (\cite{I}). 

Therefore from Proposition \ref{stdnondisplace} and Lemmas \ref{fromf1}, \ref{fromf4} we deduce:
\begin{corollary}\label{displacing}
 The only Gelfand-Tsetlin fibers that can possibly be non-displaceable are the fibers above points $(x_1,-x_1,0) \in \cc{P}$ with 
\begin{displaymath}
 \begin{cases}
  b<x_1\leq \frac{a+b}{2}&\textrm{ if }b>0\\
0\leq x_1 \leq \frac a 2 &\textrm{ if }b=0\\
0<x_1 \leq \frac{a}{2}&\textrm{ if }b<0
 \end{cases}
\end{displaymath}
\end{corollary}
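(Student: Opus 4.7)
The plan is to chain Proposition \ref{stdnondisplace} with the two probe lemmas and the boundary-dimension argument. First I would identify $\cc{W}\cap\cc{P}$ explicitly in the $n=3$ case. The projection reads $pr(x_1,x_2,x_3)=(x_3,\,x_1+x_2-x_3,\,-x_1-x_2)$, so $pr(x)=0$ forces $x_3=0$ and $x_2=-x_1$; hence $\cc{W}\cap\cc{P}$ is the segment of points of the form $(x_1,-x_1,0)$ lying in $\cc{P}$. By Proposition \ref{stdnondisplace} every fiber off this segment is displaceable, and boundary points of $\cc{P}$ other than the singular vertex $(b,b,b)$ have isotropic fibers of dimension strictly less than $3=\frac{1}{2}\dim M$, hence are displaceable as well. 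So the only remaining candidates are the interior points of $\cc{P}$ on this segment, together with $(b,b,b)$ in the case where it lies on $\cc{W}$ (namely $b=0$).

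Next I would plug $x_3=0$ and $x_2=-x_1$ into Lemmas \ref{fromf1} and \ref{fromf4}. Lemma \ref{fromf1} applies only when $b\geq 0$, and then displaces whenever $x_1>(a+b)/2$; Lemma \ref{fromf4} applies only when $b\leq 0$, and then displaces whenever $-x_1<-a/2$, i.e.\ $x_1>a/2$. Combining with the GT polytope inequalities $b\leq x_1\leq a$, $-b\leq x_1\leq a+b$, and $x_1\geq 0$, and splitting into cases by the sign of $b$, should yield the three ranges stated. For $b>0$ the first lemma gives the sharper threshold $(a+b)/2$, leaving $b<x_1\leq (a+b)/2$; for $b<0$ only the second lemma applies, leaving $x_1\leq a/2$ (together with the implicit polytope constraint $x_1\geq -b>0$); and for $b=0$ both lemmas agree on the threshold $a/2$, leaving $0\leq x_1\leq a/2$, where the left endpoint is the special vertex $(0,0,0)=(b,b,b)$.

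The computation is elementary; the small bookkeeping point worth verifying is that $x_1=(a+b)/2$ (respectively $x_1=a/2$) sits \emph{exactly} halfway along the relevant probe from $F_1$ (respectively $F_4$), so Lemma \ref{probelemma} does \textbf{not} displace the fiber there, which justifies closing the interval at that right endpoint. No genuine obstacle is expected---the real content is making sure that $\cc{W}\cap\cc{P}$ has been described correctly and that the two probe lemmas, applied case-by-case on the sign of $b$, together cover every interior point of this segment above the appropriate threshold.
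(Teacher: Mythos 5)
Your proposal is correct and follows the same route as the paper: reduce to the segment $\cc{W}\cap\cc{P}=\{(x_1,-x_1,0)\}$ via Proposition \ref{stdnondisplace} and the boundary-dimension observation, then cut down the segment using Lemma \ref{fromf1} when $b\geq 0$ and Lemma \ref{fromf4} when $b\leq 0$. Your explicit computation of $pr$ and the remark about the halfway point justifying the closed right endpoint are correct details that the paper leaves implicit.
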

\begin{proof}
 According to the above observation only fibers above interior points or above $(b,b,b)$ could be non-displaceable. Note that the set $\cc{W}$ in Proposition \ref{stdnondisplace} is
$$\cc{W}=pr \inv (0)=\{(x_1,-x_1,0) \in \cc{P}\}.$$
If $b \geq 0$ then applying Lemma \ref{fromf1} we displace fibers above points $(x_1,-x_1,0)$ with $a>x_1>\frac{a+b}{2}$. If $b\leq 0$, we apply Lemma \ref{fromf4} and displace fibers above points $(x_1,-x_1,0)$ with $a+b>x_1>\frac{a}{2}$. 
\end{proof}
\begin{remark}
The Lagrangian sphere $\Lambda \inv (b,b,b)$ could be non-displaceable only in the case $b=0$ (the case of monotone orbit).
\end{remark}

\subsection{The monotone case}

A symplectic manifold $(M, \omega)$ is called {\bf spherically monotone} if there exists $k>0$ such that for any class $X$ in the image of Hurewicz homomorphism $\pi_2(M) \rightarrow H_2(M)$ have that $$c_1(TM)[X]=k\, \omega(X).$$
Above $c_1(TM)$ denotes the first Chern class.

In this subsection we consider the case of $b=0$, that is $M$ is the $SU(3)$ orbit through $\lambda=(a,0,-a)$ for some $a>0$.
Then $M$ is spherically monotone.
Recall the theorem Entov and Polterovich.
\begin{theorem}\cite[Theorem 2.1]{EP}\label{ep}
Let $M$ be a closed connected rational and spherically monotone symplectic manifold. 
 Any finite-dimensional Poisson commutative subspace of $C^{\infty}(M)$ has at least one non-displaceable fiber. 
\end{theorem}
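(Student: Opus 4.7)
The plan, following Entov and Polterovich, is to construct a partial symplectic quasi-state $\zeta : C^{\infty}(M) \to \bb{R}$ from Hamiltonian Floer theory, establish that it is linear on Poisson-commuting families, and then run a partition-of-unity argument to produce the non-displaceable fiber. First I would set up the Oh-Schwarz spectral invariants $c(H, e)$ for Hamiltonians $H$ and classes $e \in QH_{*}(M)$. The hypotheses that $M$ is closed, rational, and spherically monotone are exactly what is needed to make the quantum homology $QH_{*}(M)$ (with its pair-of-pants product) well defined over a suitable Novikov ring, and to ensure that $c(H, e)$ is an honest real number satisfying Hamiltonian continuity and a triangle inequality with respect to the quantum product. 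Fixing an idempotent $e \in QH_{*}(M)$ (for example the fundamental class, or an idempotent summand coming from a field-factor decomposition), I would define
$$\zeta(H) \;=\; \lim_{k \to \infty} \frac{c(k\,\widetilde{H}, e)}{k},$$
where $\widetilde{H}$ is the mean-normalized version of $H$; subadditivity of $c(\cdot, e)$ under quantum multiplication forces this limit to exist.

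Next I would prove three properties of $\zeta$: (i) normalization $\zeta(1) = 1$ and monotonicity, (ii) vanishing on displaceable supports (if $H$ is supported in a displaceable open set, then $\zeta(H) = 0$), and (iii) linearity on Poisson-commuting pairs, $\zeta(F + G) = \zeta(F) + \zeta(G)$ whenever $\{F, G\} = 0$. Granting these, the theorem follows by contradiction. Let $A \subset C^{\infty}(M)$ be a finite-dimensional Poisson-commutative subspace with basis $f_1, \ldots, f_k$, set $\Phi = (f_1, \ldots, f_k) : M \to \bb{R}^k$, and assume every fiber $\Phi^{-1}(y)$ is displaceable. By compactness of $\Phi(M)$ and the openness of displaceability, choose a finite open cover $\{U_i\}$ of $\Phi(M)$ with each $\Phi^{-1}(U_i)$ displaceable, and a subordinate partition of unity $\{\rho_i\}$ on $\Phi(M)$. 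The functions $h_i = \rho_i \circ \Phi$ pairwise Poisson-commute (they are all functions of the components of $\Phi$), each $h_i$ has displaceable support, and $\sum_i h_i = 1$. Applying (iii) inductively and then (ii),
$$1 \;=\; \zeta(1) \;=\; \zeta\Bigl(\sum_i h_i\Bigr) \;=\; \sum_i \zeta(h_i) \;=\; 0,$$
the desired contradiction.

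The hard part will be property (iii). The inequality $\zeta(F + G) \leq \zeta(F) + \zeta(G)$ is immediate from the triangle inequality for spectral invariants, but equality under the Poisson-commutation hypothesis requires a Lipschitz-type estimate of the form $|\zeta(F + G) - \zeta(F) - \zeta(G)| \leq C \sqrt{\|\{F, G\}\|}$, applied to $F$, $G$ and their rescalings, and rests on a delicate Floer continuation argument together with energy estimates on pair-of-pants cobordisms. Setting up the spectral invariants themselves is technically involved but essentially standard in the rational monotone setting, and the concluding partition-of-unity step is purely soft once $\zeta$ is in hand.
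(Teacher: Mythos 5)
The paper itself offers no proof of this statement: it is quoted as a black box from \cite{EP}, with only the parenthetical remark that spherical monotonicity places $M$ in the strongly semi-positive class assumed there. So the relevant comparison is with the Entov--Polterovich argument, and your sketch is essentially that argument: homogenized Oh--Schwarz spectral invariants relative to an idempotent of quantum homology, vanishing on displaceable supports, subadditivity on Poisson-commuting families, and the partition-of-unity contradiction $1=\zeta(1)=\zeta\bigl(\sum_i h_i\bigr)\le\sum_i\zeta(h_i)=0$. One correction of emphasis: you single out full additivity on commuting pairs as the hard part, but the contradiction only consumes the subadditive direction, which (as you note) follows from the triangle inequality for spectral invariants once one observes that commuting autonomous flows compose to the flow of the sum; the Lipschitz-type estimate in $\sqrt{\|\{F,G\}\|}$ belongs to the later theory of genuine quasi-states and quasi-measures and is not needed here, since Entov and Polterovich's ``partial symplectic quasi-state'' is only required to be subadditive on commuting pairs. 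The genuinely nontrivial inputs are the construction of the spectral invariants themselves (where rationality and semi-positivity enter) and the vanishing property (ii), which rests on bounding $c(kH,e)$ uniformly in $k$ by the displacement energy of the support; your outline identifies both correctly, so the proposal is a faithful reconstruction of the cited proof rather than a new route.
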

(In \cite{EP} it is assumed that $M$ is strongly semi-positive. Spherically monotone manifolds are a special class of strongly semi-positive.)

We will consider unnormalized Gelfand-Tsetlin functions. For any matrix $A=[a_{ij}]  \in M$ let $b_3(A)=a_{11}$, and let $b_1(A),b_2(A)$ be the coefficients of the characteristic polynomial of the $2 \times 2$ top left minor of $A$, i.e. the characteristic polynomial is $t^2+b_1 t+b_2$. These three functions, smooth on $M$, are sometimes called {\bf unnormalized Gelfand-Tsetlin functions}. They have the same level sets as the Gelfand-Tsetlin functions $(a_1,a_2,a_3)$. Note that functions $b_1$ and $b_2$ are $U(2)$ invariant, so they Poisson commute. Proposition 3.2 in \cite{GS1} gives that all three functions: $b_1$, $b_2$ and $b_3$ Poisson commute.
The subspace of $C^{\infty}(M)$ generated by the functions $b_1$, $b_2$ and $b_3$ is finite-dimensional and Poisson commutative. 
Now we start the search of this non-displaceable fiber.

The theorem Entov and Polterovich quoted above proves the existence of non-displaceable fiber of the map $(b_1, b_2,b_3)$. Such a fiber is also a fiber of the Gelfand-Tsetlin system. This proves the following proposition.
\begin{proposition}\label{existence}
 For the $SU(3)$ orbit through $(a,0,-a)$, $a>0$ 
there exists at least one non-displaceable fiber of the Gelfand-Tsetlin system. That is, there exists at least one $p \in \cc{P}$ such that $\Lambda\inv(p)$ is non-displaceable.
\end{proposition}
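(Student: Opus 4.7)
The plan is to invoke Theorem \ref{ep} directly with the Poisson-commutative subspace generated by the unnormalized Gelfand-Tsetlin functions $b_1$, $b_2$, $b_3$ described above the statement, and then translate the resulting non-displaceable fiber of $(b_1,b_2,b_3)$ into a non-displaceable fiber of $\Lambda$.

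First I would check that the hypotheses of Theorem \ref{ep} are met for the orbit $M$ through $(a,0,-a)$. The coadjoint orbit is a closed connected smooth manifold, the Kostant-Kirillov form is integral (up to an overall scaling of $a$) since the point $\lambda=(a,0,-a)$ lies in the integer weight lattice after rescaling, so $M$ is rational, and the spherical monotonicity for this symmetric choice $b=0$ is the standard computation recalled at the opening of this subsection.

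Next I would form the finite-dimensional subspace $V=\mathrm{span}\{1,b_1,b_2,b_3\}\subset \smooth(M)$. Pairwise Poisson commutativity of $b_1,b_2,b_3$ is exactly what was recorded just above the proposition: $b_1$ and $b_2$ are $U(2)$-invariant (hence Poisson commute with each other and with $b_3=a_{11}$), and Proposition 3.2 of \cite{GS1} supplies $\{b_1,b_3\}=\{b_2,b_3\}=0$. So $V$ is a finite-dimensional Poisson commutative subspace and Theorem \ref{ep} applies, producing some $q\in\bb{R}^3$ for which the fiber $F^{-1}(q)$ of the map $F=(b_1,b_2,b_3)\colon M\to\bb{R}^3$ is non-displaceable.

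Finally I would transfer this to $\Lambda$. The function $b_3$ is the first Gelfand-Tsetlin coordinate $\lambda^{(1)}_1$, and by Vieta the pair $(b_1,b_2)$ are the elementary symmetric polynomials in the eigenvalues $\lambda^{(2)}_1,\lambda^{(2)}_2$ of the $2\times 2$ top-left minor. Consequently $F$ and $\Lambda$ have identical level sets, and the non-displaceable set $F^{-1}(q)$ coincides with $\Lambda^{-1}(p)$ for the point $p\in\cc{P}$ whose coordinates are obtained from $q$ by solving the corresponding quadratic. This yields the claimed $p$.

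There is really no single hard step; the only thing that needs attention is the verification of rationality and monotonicity of $M$ so that Theorem \ref{ep} is actually applicable, and the pointwise identification of the fibers of $F$ with the fibers of $\Lambda$.
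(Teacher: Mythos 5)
Your proposal is correct and follows essentially the same route as the paper: the paper's proof also consists of applying the Entov--Polterovich theorem to the finite-dimensional Poisson commutative subspace spanned by the unnormalized Gelfand-Tsetlin functions $b_1,b_2,b_3$ and then observing that their level sets coincide with those of $\Lambda$. Your extra care in verifying rationality/monotonicity and in spelling out the Vieta correspondence between $(b_1,b_2,b_3)$ and the Gelfand-Tsetlin coordinates only makes explicit what the paper leaves implicit in the surrounding discussion.
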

Corollary \ref{displacing} gives that this non-displaceable fiber must be of the form $\Lambda \inv (x_1,-x_1,0)$ for some $0 \leq x_1 \leq \frac a 2 $.
We cannot show that they are non-displaceable, but we can prove
\begin{proposition}
 The fibers above points $(x_1,-x_1,0) \in \cc{P}$ with $0 \leq x_1 \leq \frac a 2$ are not displaceable by probes.
\end{proposition}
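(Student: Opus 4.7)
The plan is to reformulate the probe-displaceability condition and then exploit the symmetry of $\cc{P}$. For a probe $p_{F,\alpha}(w)$ containing $u = w + s_p\alpha$, the point $u$ is strictly less than halfway along the probe precisely when $w + 2s_p\alpha$ lies in $\textrm{int}\,\cc{P}$; since $w + 2s_p\alpha = 2u - w$, Lemma \ref{probelemma} applies to $L_u$ exactly when $w$ lies in the relative interior of $F$ and the point reflection $2u - w$ of $w$ through $u$ lies in $\textrm{int}\,\cc{P}$. Hence, to prove that $L_p$ is not displaceable by probes for $p = (x_1,-x_1,0)$ with $0 \leq x_1 \leq a/2$, it suffices to verify that for every facet $F_i$ and every integrally transverse primitive $\alpha$, at least one of the conditions ``$w$ in relative interior of $F_i$'' or ``$2p - w \in \textrm{int}\,\cc{P}$'' fails.

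Next, I would cut the number of facets in half using the affine involution $\sigma \colon (x_1,x_2,x_3) \mapsto (-x_2,-x_1,-x_3)$. This $\sigma$ lies in $GL_3(\bb{Z})$ (so it preserves the lattice of probe directions), preserves $\cc{P}$ in the monotone case $b=0$, fixes every point of the line $\{(x_1,-x_1,0)\}$, and permutes the facets by $F_1 \leftrightarrow F_4$, $F_2 \leftrightarrow F_3$, $F_5 \leftrightarrow F_6$. Consequently only probes emanating from $F_1$, $F_2$, and $F_5$ need to be analyzed.

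For each of these three facets I would parametrize the integrally transverse primitive directions as $\alpha = (-1,\beta,\gamma)$ for $F_1$, $\alpha = (1,\beta,\gamma)$ for $F_2$, and $\alpha = (\beta,\gamma,\beta-1)$ for $F_5$, with $\beta, \gamma \in \bb{Z}$. The requirement that $p$ lie on the ray $w + s\alpha$ with $w$ on the facet pins down $s_p$ uniquely: $s_p = a - x_1$ for $F_1$ and $s_p = x_1$ for both $F_2$ and $F_5$. Thus $w$ and $2p - w$ become explicit affine expressions in $x_1, \beta, \gamma$, and writing out the two linear systems ``$w$ in the relative interior of $F_i$'' and ``$2p - w \in \textrm{int}\,\cc{P}$'' yields an immediate contradiction case by case: for $F_1$ the first coordinate of $2p - w$ is $2x_1 - a \leq 0$, which fails interiority for $x_1 \leq a/2$; for $F_2$ the relative-interior condition forces $\beta \geq 1$ while interiority of $2p - w$ forces $\beta \leq 0$; for $F_5$ the only joint candidate is $\beta = \gamma = 0$, which violates the strict inequality $\gamma < \beta$ required by $2p - w \in \textrm{int}\,\cc{P}$.

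The hard part is essentially just the bookkeeping; once one recasts the halfway condition as ``the reflected point $2p - w$ lies in $\textrm{int}\,\cc{P}$,'' each facet reduces to a very short linear-inequality check. The boundary values $x_1 = 0$ and $x_1 = a/2$ need no separate treatment: at $x_1 = 0$ the point $p$ is the non-smooth vertex $(0,0,0)$ of $\cc{P}$, so Lemma \ref{probelemma} does not apply at all since its hypothesis $u \in \textrm{int}\,\cc{P}$ is violated, and at $x_1 = a/2$ the strict inequalities above simply become equalities, which only strengthens each contradiction.
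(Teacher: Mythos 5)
Your proof is correct, and at its core it is the same facet-by-facet elimination of probes that the paper carries out; but you organize it differently in three ways that are worth noting. First, your reformulation of the halfway condition as ``$2u-w\in\textrm{int}\,\cc{P}$'' is exactly the criterion the paper uses implicitly (it checks whether $w+2x_1\alpha$ lands in $\textrm{int}\,\cc{P}$), just stated once and for all. Second, the involution $\sigma(x_1,x_2,x_3)=(-x_2,-x_1,-x_3)$ does lie in $GL_3(\bb{Z})$, preserves $\cc{P}$ when $b=0$, fixes the segment $N$ pointwise, and swaps $F_1\leftrightarrow F_4$, $F_2\leftrightarrow F_3$, $F_5\leftrightarrow F_6$; this cleanly replaces the paper's ``similarly one can show'' for $F_3$ and $F_6$. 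Third, and most substantively, your parametrization of the integrally transverse directions to $F_5$ as $(\beta,\gamma,\beta-1)$ is the complete one: the paper asserts that such vectors have the form $(1,k,0)$ or $(0,k,-1)$, which omits directions like $(2,0,1)$, so your version of the $F_5$ case is actually more thorough (your computation that $s_p=x_1$ for every such direction, and that interiority of $w$ forces $\beta\le 0$, $\gamma\ge 0$ while interiority of $2p-w$ forces $\beta\ge 0$, $\gamma\le 0$ and $\gamma<\beta$, checks out). Your handling of the endpoints is also fine: at $x_1=0$ the point is the vertex $(b,b,b)=(0,0,0)$, where no probe applies, and at $x_1=a/2$ the inequalities only fail more decisively. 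In short: same strategy, tighter bookkeeping, and a small completeness gap in the paper's $F_5$ case closed.
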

\begin{proof} Let $$N:=\{(x_1,-x_1,0) \in \cc{P};\,\,0 \leq x_1 \leq \frac a 2\,\}.$$
 First notice that these fibers are not displaceable by probes from facets $F_1$, $F_4$ because the distance from any of these facets to $N$ along any probe is at least half of the length of the probe. 

The vectors integrally transverse to $F_2$ are of the form $(1, k, l)$ for $k,l \in \bb{Z}$. The line $\{(x_1,-x_1,0)-t(1, k, l);\,t \in \bb{R}\}$ intersects interior of the facet $F_2$ (at a point $(0,-x_1(k+1),-lx_1)$) if and only if 
$$x_1>0\textrm{ and }0<l<k+1<\frac{a}{x_1}.$$ The affine distance from $(0,-x_1(k+1),-lx_1)$ to $(x_1,-x_1,0)$ in the direction $(1, k, l)$ is $x_1$. Therefore the probe from $(0,-x_1(k+1),-lx_1)$ in the direction of $(1, k, l)$ can be used to displace the fiber above $(x_1,-x_1,0)$ if and only if the length of the probe is greater then $2x_1$, that is, if $$(0,-x_1(k+1),-lx_1)+2x_1(1, k, l)=(2x_1,(k-1)x_1,lx_1)\in \,int\, \cc{P}.$$ In particular this means that $2>l$ and $l>k-1>0$. There are no integers $k,l$ satisfying these conditions, therefore the fibers above points in $N$ cannot be displaced by probes from the facet $F_2$.
Similarly one can show that they also cannot be displaced by probes from the facet $F_3$.

The vectors integrally transverse to the facet $F_5$ are of the form $(1,k,0)$ or $(0,k,-1)$, $k \in \bb{Z}$, but only the second family can give probes intersecting $N$. The line $\{(x_1,-x_1,0)-t(0,k,-1);\,t \in \bb{R}\}$ intersects the hyperplane $\{e_1=e_3\}$ at a point $(x_1,-x_1-kx_1,x_1)$. This intersection point is in the interior of the facet $F_5$ if and only if 
$$x_1>0,\,\,0>-x_1-kx_1>-a,\textrm{ and }x_1>-x_1-kx_1,$$ that is, if 
$-1 <k<-1 + \frac{a}{x_1}.$ The distance from $(x_1,-x_1-kx_1,x_1)$ to $(x_1,-x_1,0)$ in the direction $(0,k,-1)$ is $x_1$. Therefore the probe from $(x_1,-x_1-kx_1,x_1)$ in the direction $(0,k,-1)$ can be used to displace the fiber above $(x_1,-x_1,0)$ if and only if 
$$(x_1,-x_1,0)+2x_1(0,k,-1)=(x_1,kx_1-x_1,-x_1) \in \,int\, \cc{P}. $$
In particular this means that $k< 0$. There are no integers $k$ satisfying $k< 0$ together with the condition $-1 <k$ from above. Therefore the fibers above points in $N$ cannot be displaced by probes from the facet $F_5$. Similar argument proves that they cannot be displaced by probes from the facet $F_6$.
\end{proof}
Patient reader can check that these fibers also cannot be displaced by ''extended probes'' (see \cite{ABM} for definition). 

\subsection{Non-monotone case. Proof of Theorem \ref{main}}

Using Floer theory, without assuming monotonicity, 
Nishinou, Nohara and Ueda proved in \cite{NNU} the following theorem.
\begin{theorem} \cite[Theorem 12.1]{NNU} \label{fromnnu}
For any $\lambda \in \lie{su}(n)^*$ let $\cc{O}_{\lambda}$ stand for the $SU(n)$ coadjoint orbit through $\lambda$. There exists $u \in Int\, \Lambda(\cc{O}_{\lambda})$ in the interior of the Gelfand-Tsetlin polytope for $\cc{O}_{\lambda}$ such that the fiber $\Lambda \inv (u)$ is non-displaceable. 
\end{theorem}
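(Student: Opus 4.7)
The plan is to approach the statement via toric degeneration combined with bulk-deformed Lagrangian Floer theory, in the style of Fukaya-Oh-Ohta-Ono. First, I would exhibit a flat toric degeneration of the coadjoint orbit $\cc{O}_\lambda$ to the Gelfand-Tsetlin toric variety $X_{\cc{P}}$, a (in general singular) toric variety whose moment polytope is $\cc{P}$. A Moser-type argument then identifies the Gelfand-Tsetlin fibers $\Lambda\inv(u)$ for $u \in \textrm{Int}\,\cc{P}$ with the corresponding torus orbits of $X_{\cc{P}}$ up to Hamiltonian diffeomorphism. This reduces the problem to finding a non-displaceable torus fiber inside $X_{\cc{P}}$.

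Second, I would compute the bulk-deformed Landau-Ginzburg potential $\mathfrak{PO}^{\mathfrak{b}}$ associated to the toric fibers. For each $u$, $\mathfrak{PO}^{\mathfrak{b}}(u;y)$ is a Laurent polynomial in formal variables $y_1,\dots,y_N$ over the Novikov field, with one monomial per facet of $\cc{P}$ weighted by the affine distance from $u$ to that facet, plus correction terms coming from a bulk class $\mathfrak{b}$. By the standard FOOO criterion, if $(u,y)$ satisfies the critical-point equations $y_i\,\partial_{y_i}\mathfrak{PO}^{\mathfrak{b}}=0$ with $u$ in the interior of $\cc{P}$, then $\Lambda\inv(u)$ has non-vanishing bulk-deformed Floer cohomology and is therefore non-displaceable.

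Third, I would solve the critical-point system. In the monotone case Entov-Polterovich already gives existence, but in the non-monotone case the asymmetry of $\cc{P}$ prevents the unperturbed potential from having an interior critical point; this is where the bulk deformation is essential. Choosing $\mathfrak{b}$ supported on the toric divisors meeting the singular vertex of $\cc{P}$ (the point $(b,b,b)$ in the $SU(3)$ case) introduces new monomials in $\mathfrak{PO}^{\mathfrak{b}}$ whose coefficients can be tuned so that the system has a solution at some interior $u^\ast$.

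The main obstacle is genuinely Floer-theoretic: one must verify that the moduli spaces of holomorphic discs bounded by GT fibers are well-behaved under the degeneration, and that the singular toric strata of $X_{\cc{P}}$ (corresponding to non-smooth vertices of $\cc{P}$) can be handled either through a small resolution or through Kuranishi-theoretic regularization, so that the resulting potential is genuinely an invariant of the Lagrangian fiber in the smooth flag variety $\cc{O}_\lambda$. Once the potential is rigorously defined and $\mathfrak{b}$ is chosen, the existence of an interior critical point, and hence of a non-displaceable fiber, reduces to a finite-dimensional algebraic calculation.
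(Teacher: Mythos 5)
First, a point of logic: the paper does not prove this statement at all --- it is imported verbatim as Theorem 12.1 of \cite{NNU}, and everything the paper subsequently does (the proof of Theorem \ref{main}) treats it as a black box, supplying only the complementary displacement results. So there is no internal proof to compare yours against; the relevant comparison is with the argument of \cite{NNU} itself, and your overall plan --- toric degeneration of $\cc{O}_\lambda$ to the Gelfand--Tsetlin toric variety, identification of $\Lambda\inv(u)$ with a toric fiber, and the Fukaya--Oh--Ohta--Ono critical-point criterion for the potential function --- is indeed the strategy of that reference.

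As a proof, however, the proposal has genuine gaps. All of the Floer-theoretic content is concentrated in what you yourself label ``the main obstacle'': constructing the flat degeneration, transporting Lagrangian fibers along it (in \cite{NNU} this is a gradient-Hamiltonian flow argument, delicate precisely because the central fiber is a singular toric variety and the flow must be shown to carry $\Lambda\inv(u)$ to a torus orbit away from the singular locus), and proving that the resulting potential function of the Gelfand--Tsetlin fiber coincides with the toric one. None of this is supplied, and it is the actual substance of the theorem. Moreover, your third step rests on a claim that is both unproved and contradicted by the paper's own commentary: you assert that in the non-monotone case the undeformed potential has no interior critical point, so that a bulk class $\mathfrak{b}$ supported near the singular vertex is essential. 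But the paper explicitly records that the computations in Section 11 of \cite{NNU} find a critical point of the \emph{undeformed} potential function for non-monotone $SU(3)$ orbits, located over $(\frac{a}{2},-\frac{a}{2},0)$ or $(\frac{a+b}{2},-\frac{a+b}{2},0)$; over the Novikov field the critical-point equations for the Gelfand--Tsetlin polytope are solvable without bulk deformation because the polytope is not a product and has enough facets. Finally, you never solve (or even set up) the critical-point system for general $\lambda$ and general $n$, nor justify that your proposed $\mathfrak{b}$ can be ``tuned'' to produce an interior solution. As written, the proposal correctly identifies the strategy of \cite{NNU} but defers every step that makes that strategy work.
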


Let $M$ be a regular $SU(3)$ coadjoint orbit that is not monotone, that is an orbit through $\lambda=(a>b>-a-b)$ for some $a,b$ with $b \neq0$. The above theorem proves the existence of non-displaceable Gelfand-Tsetlin fiber. Moreover, computations done in Examples in Section 11 of \cite{NNU} (finding a critical point of the potential funciton), imply that the fiber above $(\frac a 2,- \frac a 2,0)$ if $b<0$, or $(\frac{a+b}{2},-\frac{a+b}{2},0)$ if $b>0$, is non-displaceable.
Below, while proving Theorem \ref{main}, we recover this result, not via potential functions methods of \cite{NNU}, but by displacing other fibers using probes. The advantage of our method is that it also proves uniqueness and displaces the boundary fibers (like the Lagrangian sphere $\Lambda \inv (b,b,b)$).

From Corollary \ref{displacing} we know that the non-displaceable fiber must be of the form $\Lambda \inv (x_1,-x_1,0)$ for some $(x_1,-x_1,0)\in \, int\,\cc{P}$. 
The Figure \ref{slices} shows slices of Gelfand-Tsetlin polytope at $x_3=0$ for the cases of $b<0$ and $b>0$, that is, a polytope in $\bb{R}^2$ satisfying $\max (0,b) \leq x_1 \leq a$ and $-a-b \leq x_2 \leq \min(0,b)$. The black bold line segment corresponds to the possibly non-displaceable fibers.
\begin{figure}
 \includegraphics[width=0.43\textwidth]{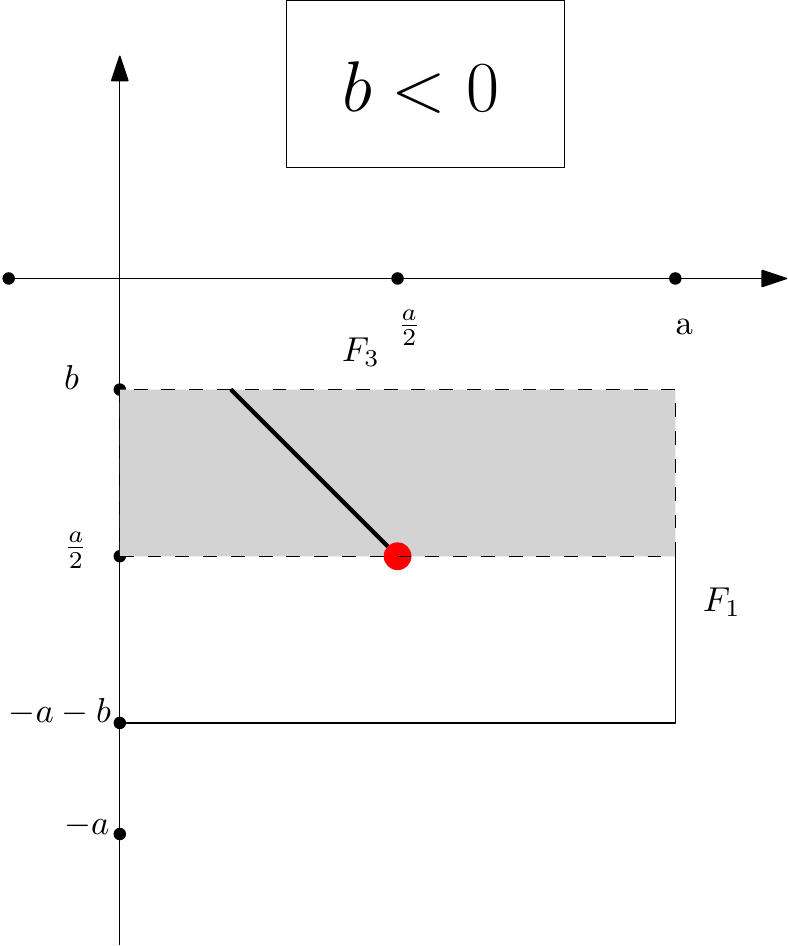} \hspace{0.3in} \includegraphics[width=0.43\textwidth]{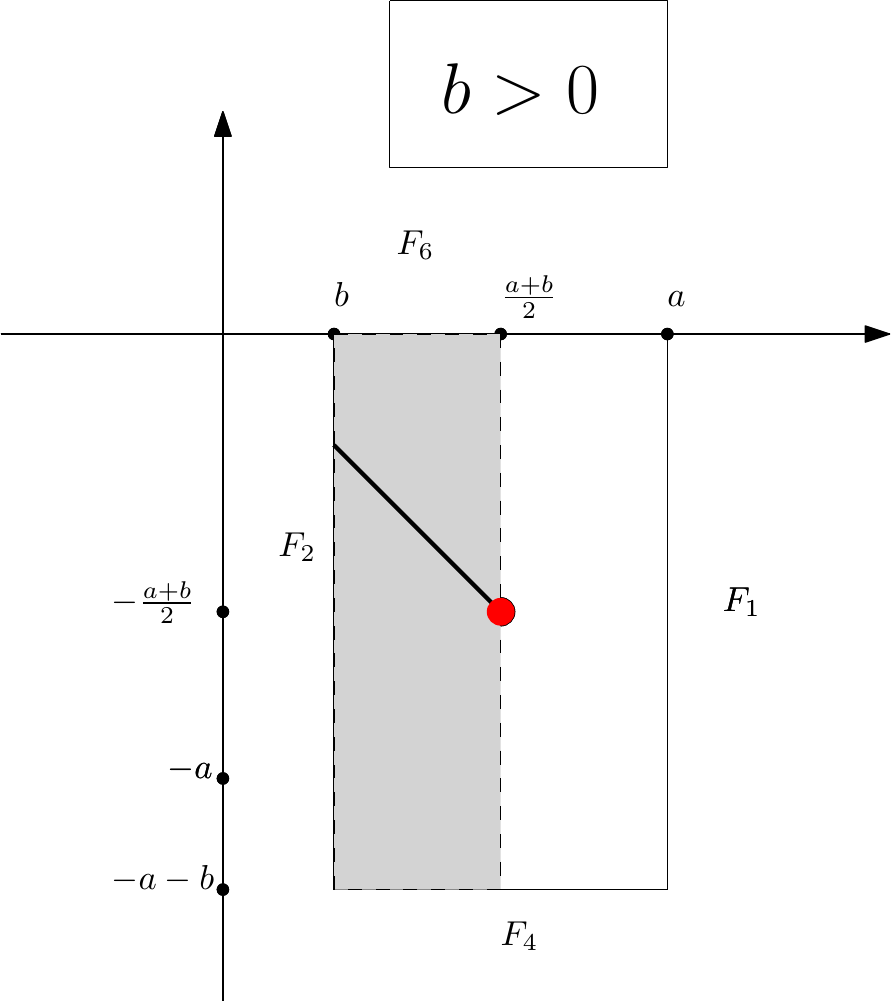}
\caption{The slices of the Gelfand-Tsetlin polytope at $x_3=0$ for $b<0$ and $b>0$.}\label{slices}
\end{figure}
\begin{lemma}\label{fromf3}
 If $b<0$ then the fibers above points $(x_1,x_2,0)$ with $0 < x_1 < a$ and $-\frac a 2 <x_2<b$ (shaded region) are displaceable by probes.
\end{lemma}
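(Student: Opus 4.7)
The plan is to apply Lemma \ref{probelemma} directly, using for each candidate point $(x_1,x_2,0)$ a single probe from the facet $F_3=\{x_2=b\}$ in the direction $\alpha=(0,-1,0)$, with initial point $w:=(x_1,b,0)\in F_3$. Since $(0,-1,0)$ is the primitive inward normal to $F_3$, it is integrally transverse to $F_3$ (the triple $\{e_1,\alpha,e_3\}$ is a $\bb{Z}$-basis of $\bb{Z}^3$).

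First I would verify that $w$ lies in the relative interior of $F_3$, i.e.\ that the remaining five facet inequalities hold strictly at $w$. The non-trivial ones are $x_1<a$ (hypothesis), $x_1>b$ (immediate from $x_1>0>b$), $0>b$ (hypothesis $b<0$), and $b>-a-b$. The last is the strict inequality $a+2b>0$ built into the regularity assumption $a>b>-a-b$. As a by-product, since $b<0$ implies $-2b>-b$, one gets $a>-b$, so $w$ is bounded away from the singular vertex $(b,b,b)$, ensuring that the probe stays in the smooth toric locus $U$ where Lemma \ref{probelemma} applies.

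Next I would trace the probe $t\mapsto w+t\alpha=(x_1,b-t,0)$. Since $x_1$ and $x_3=0$ are held constant, the only polytope inequality that can become tight is $x_2\geq -a-b$, giving exit at $t=a+2b$; the condition $x_3\geq x_2$ reduces to $t\geq b$, which is automatic because $b<0$. Thus the affine length of the probe is exactly $a+2b$. The point $u=(x_1,x_2,0)$ corresponds to parameter $t_0=b-x_2$, and the hypothesis $x_2>-a/2$ rearranges to
\[
t_0 \;=\; b-x_2 \;<\; b+\tfrac{a}{2} \;=\; \tfrac{a+2b}{2},
\]
which is strictly less than half the probe length. Lemma \ref{probelemma} then gives displaceability of the fiber $\Lambda^{-1}(u)$.

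There is no substantive obstacle: the argument is a direct calculation once the correct facet and probe direction are identified. The only things to be mindful of are (i) confirming that $w$ sits in the interior of $F_3$ for \emph{all} $x_1\in(0,a)$, which is why the assumption $b<0$ is essential (when $b\geq 0$ the candidate starting point would violate the constraint $b\geq x_3$ since $x_3=0$), and (ii) noting that $a+2b>0$ is what makes the notion of ``halfway'' nontrivial, so that the boundary $x_2=-a/2$ of the displaceable region matches precisely the halfway point of the probe.
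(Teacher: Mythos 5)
Your proof is correct and follows essentially the same route as the paper: a single probe from the facet $F_3$ at $w=(x_1,b,0)$ in the direction $(0,-1,0)$, with probe length $a+2b$ and the condition $x_2>-\frac a2$ placing the target point strictly before the halfway mark. Your additional checks (interior of $F_3$, avoidance of the singular vertex) are correct and only make explicit what the paper leaves implicit.
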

\begin{proof}
 The vector $(0,-1,0)$ is integrally transverse to the facet $F_3$. Take any point $(x_1,x_2,0)$ satisfying the above conditions. Then a point $w=(x_1,b, 0)$ is in the interior of $F_3$ and the probe from $w$ in the direction of $(0,-1,0)$ is the set $\{(x_1, b-t, 0);\;t\in[0,a+2b)\}$. Our assumptions imply that $-b<-x_2<\frac a 2$ so 
$x_2= b-t$ with $t< b+\frac{a}{2}$. Therefore the point $(x_1,x_2,0)$ is displaceable by probe from $w$.
\end{proof}
If $b=0$ then the point $w=(x,b,0)=(x,0,0)$ is on the boundary of $F_3$ and the probe could not start from $w$
\begin{lemma}\label{fromf2}
 If $b>0$ then the fibers above points $(x_1,x_2,0)$ with $b < x_1 < \frac{a+b}{2}$ and $-a-b <x_2<0$ (shaded region) are displaceable by probes.
\end{lemma}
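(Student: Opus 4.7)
The statement is essentially the mirror of Lemma \ref{fromf3} under the symmetry that swaps the roles of the two ``inner'' facets $F_2$ and $F_3$ when the sign of $b$ flips. So my plan is to copy the structure of the proof of Lemma \ref{fromf3} verbatim, but with the probe issuing from the facet $F_2$ in the direction $(1,0,0)$ instead of from $F_3$ in the direction $(0,-1,0)$. Recall $F_2 = \{x : \langle x, -e_1\rangle = b\}$ has primitive inward normal $(1,0,0)$, so $(1,0,0)$ is integrally transverse to $F_2$ and points into $\cc{P}$.

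Given a point $(x_1,x_2,0)$ with $b < x_1 < \frac{a+b}{2}$ and $-a-b < x_2 < 0$, set $w := (b, x_2, 0)$. The first step is to verify that $w$ lies in the \emph{interior} of $F_2$, i.e.\ that all the other defining inequalities of $\cc{P}$ hold strictly at $w$. Since $b > 0$ we have $b > x_2$ (because $x_2 < 0$) and $x_2 > -a-b$ by hypothesis, so the $F_3, F_4$ constraints are strict; and the constraints $x_1 \geq x_3 \geq x_2$ become $b \geq 0 \geq x_2$, both strict because $b > 0$ and $x_2 < 0$. Thus $w \in \mathrm{int}\, F_2$.

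Next I compute the probe: $p_{F_2,(1,0,0)}(w) = \{(b+t, x_2, 0) : t \in [0, a-b)\}$, where the length $a-b$ is determined by the opposing facet $F_1$ (since $x_2 < 0 < b$ keeps us strictly away from $F_3, F_4, F_5, F_6$ throughout the segment). The point $(x_1,x_2,0)$ is on this probe with parameter $t = x_1 - b$, and the hypothesis $x_1 < \frac{a+b}{2}$ gives $t < \frac{a-b}{2}$, which is strictly less than half the length of the probe. Applying Lemma \ref{probelemma} finishes the proof.

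There is essentially no obstacle here; the only mild point to keep in mind is the condition $b > 0$, which is what guarantees that $w = (b,x_2,0)$ lies in the \emph{interior} of $F_2$ (the analogue of the remark following Lemma \ref{fromf3} that the probe cannot start from the boundary of its issuing facet when $b=0$).
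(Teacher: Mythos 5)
Your proof is correct and follows essentially the same route as the paper: a probe from $w=(b,x_2,0)$ on the facet $F_2$ in the direction $(1,0,0)$, reaching the target point at parameter $t=x_1-b<\frac{a-b}{2}$, less than half the probe's length $a-b$. You are in fact slightly more careful than the paper, which does not spell out the verification that the initial point lies in the interior of the issuing facet (and which contains a typo calling that facet $F_1$ rather than $F_2$).
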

\begin{proof}
 The vector $(1,0,0)$ is integrally transverse to the facet $F_2$. Take any point $(x_1,x_2,0)$ satisfying the above conditions. Then a point $v=(b,x_2, 0)$ is in the interior of $F_1$ and the probe from $v$ in the direction of $(1,0,0)$ is the set $\{(b+t, x_2, 0);\;t\in[0,a-b)\}$. Our assumptions imply that $x_1= b+t$ with $t< \frac{a-b}{2}$. Therefore the point $(x_1,x_2,0)$ is displaceable by probe from $v$.
\end{proof}
If $b=0$ then the point $v=(0,x_2,0)$ is on the boundary of $F_1$.

\begin{proof}{\it (of Theorem \ref{main})}
The above Lemmas with Theorem \ref{fromnnu} prove Theorem \ref{main}.
\end{proof}
\bibliographystyle{alpha}
\bibliography{GelfandTsetlinRelated}
\end{document}